\newcommand{\0}{\mathbb{0}}
\newcommand{\1}{\mathbb{1}}
\newcommand{\R}{\mathbb{R}}
\newcommand{\bb}{\mathbf{b}}
\newcommand{\wb}{\mathbf{w}}
\newcommand{\xb}{\mathbf{x}}
\newcommand{\yb}{\mathbf{y}}
\newcommand{\zb}{\mathbf{z}}
\newcommand{\Ab}{\mathbf{A}}
\newcommand{\Lb}{\mathbf{L}}
\newcommand{\Qb}{\mathbf{Q}}
\newcommand{\Sb}{\mathbf{S}}
\newcommand{\Dc}{\mathcal{D}}
\newcommand{\Ec}{\mathcal{E}}
\newcommand{\Gc}{\mathcal{G}}
\newcommand{\Lc}{\mathcal{L}}
\newcommand{\Nc}{\mathcal{N}}
\newcommand{\Vc}{\mathcal{V}}
\newcommand{\argmin}{\text{argmin}}
\newcommand{\diag}{\text{diag}}
\newcommand{\col}{\text{col}}
\newcommand{\norm}[1]{\left\lVert#1\right\rVert}
\newtheorem{theorem}{Theorem}
\newtheorem{assumption}{Assumption}
\newtheorem{lemma}[theorem]{Lemma}
\title{\LARGE \bf Distributed Mirror Descent with Integral Feedback: Asymptotic Convergence Analysis of Continuous-time Dynamics}
\author{Youbang Sun and Shahin Shahrampour, {\it Senior Member}, {\it IEEE}    
\thanks{Y. Sun and S. Shahrampour are with Wm Michael Barnes '64 Department of Industrial and Systems Engineering, Texas A\&M University, College Station, TX 77843, USA. 
        {\tt\small email:\{ybsun,shahin\}@tamu.edu}.}%
}
\begin{document}

\maketitle
\thispagestyle{empty}
\pagestyle{empty}

\begin{abstract}
This work addresses distributed optimization, where a network of agents wants to minimize a global strongly convex objective function. The global function can be written as a sum of local convex functions, each of which is associated with an agent. We propose a continuous-time distributed mirror descent algorithm that uses purely local information to converge to the global optimum. Unlike previous work on distributed mirror descent, we incorporate an integral feedback in the update, allowing the algorithm to converge with a constant step-size when discretized. We establish the asymptotic convergence of the algorithm using Lyapunov stability analysis. We further illustrate numerical experiments that verify the advantage of adopting integral feedback for improving the convergence rate of distributed mirror descent. 
\end{abstract}

\section{Introduction}
The mirror descent (MD) algorithm \cite{nemirovsky1983originalmd} is a primal-dual method that has been successfully used for large-scale convex optimization problems. MD can be seen as a generalization of gradient descent, which can exploit the geometry of the optimization problem. The algorithm replaces the Euclidean distance with a so-called {\it Bregman divergence} as the regularizer for projection. This idea provides a significant convergence speed-up for high-dimensional optimization problems \cite{ben2001ordered}.

In practice, optimization methods (including MD) are numerically implemented in discrete time, but their continuous-time analysis has always been of major interest to the control and optimization community \cite{bloch1994hamiltonian,brown1989some,helmke2012optimization}. This stems from the fact that many optimization methods can be interpreted as discretization of ordinary differential equations (ODEs), and therefore, their convergence can be established using the theory of control and dynamical systems. The MD algorithm is no exception in this regard, and it can be studied via a system of ODEs \cite{krichene2015acceleratedmd}.

In this work, we address distributed continuous-time optimization via decentralized mirror descent, inspired by the success of centralized MD in large-scale optimization. In this setup, a network of agents wants to minimize a global strongly convex objective function. The global function can be written as a sum of local convex functions, each of which is associated with an agent. We develop a continuous-time decentralized MD algorithm that uses purely local gradient information to converge to the global minimizer. Contrary to the prior work on (discrete) distributed mirror descent (e.g., \cite{li2016distributed,shahrampour2017distributed}), we enforce consensus among agents using the idea of integral feedback, in addition to the standard neighborhood averaging. The integral feedback is particularly useful for implementation purposes, allowing the algorithm to converge with a constant step-size when discretized. We establish the asymptotic convergence using Lyapunov stability analysis, based on a Lyapunov function that relies on both primal and dual variables. Our numerical experiments verify that adopting integral feedback improves the convergence rate of distributed mirror descent.

\subsection{Related Literature}

{\bf I) Gradient Tracking in Discrete Distributed Gradient Descent (DGD):} A natural question in (discrete) distributed optimization is that whether decentralized algorithms are able to perform on par with their centralized counterparts. For purely convex problems (non-strongly convex and non-smooth), this could be done using diminishing step sizes \cite{nedic2009distributed}, which tends agents to an agreement. However, since centralized gradient descent for strongly convex and/or smooth problems works optimally under the constant step-size setting, its decentralization was challenging. Therefore, a number of works (see e.g., \cite{shi2015extra, sun2019convergence, di2016next}) have proposed the idea of gradient tracking to overcome this hurdle. The term ``tracking" implies that the algorithm uses a variable calculated from past gradients to keep track of the information from the network. It then uses the variable combined with the current local gradient to output a ``corrected" gradient such that the network agents are able to reach consensus. Such modification enables the decentralized algorithm to match its centralized counterpart in terms of convergence rate.

\textbf{II) Continuous-time DGD:} Of particular relevance to the current work is the literature on {\it continuous-time} DGD \cite{lin2019distributed, liu2014continuous, cortes2013weightbalanceddigraph, 7744584, kia2015distributed,yang2016multi}. Similar to the centralized setup, these works construct ODEs to describe the dynamics of DGD. While the concept of DGD is rather straightforward, for continuous-time analysis in certain cases (e.g., strongly convex problem), no desirable results are obtained by simply combining gradient descent with a standard neighborhood averaging. This is in the similar spirit as the challenge in discrete DGD, overcome by gradient tracking. To tackle the continuous-time problem, multiple works have utilized the integral feedback idea \cite{cortes2013weightbalanceddigraph, kia2015distributed, 7744584, yang2016multi}, which introduces another variable to drive the disagreement among agents to zero. Nevertheless, these works are on gradient descent, and investigating this idea for MD, which is a more general framework, is still an open research problem.

{\bf III) Distributed Mirror Descent (DMD):} Decentralizing mirror descent has recently drawn a great deal of attention. While our focus is on the {\it continuous-time} analysis, DMD has been largely analyzed in {\it discrete} time in various contexts, such as online optimization \cite{shahrampour2017distributed,9070199}, stochastic optimization \cite{yuan2018optimal,7383850}, and the
effect of delays in distributed optimization \cite{li2016distributed}. It has also been applied to social learning and belief dynamics \cite{shahrampour2015distributed}. Furthermore, Doan et al. \cite{8409957} study the convergence of iterates for both centralized and decentralized MD. A large subset of these works (e.g., \cite{li2016distributed,shahrampour2017distributed,9070199,8409957,yuan2018optimal}) feature diminishing step-size to ensure consensus. Continuous-time DMD has been studied in \cite{borovykh2020interact,raginsky2012Variancereduction} with the motivation of noise-variance reduction in stochastic optimization. The main distinction of our work with the literature on DMD is adopting and analyzing the idea of integral feedback.

\subsection{Paper Organization}
The rest of this paper is organized as follows. In Section \ref{Formulation}, we lay out the problem formulation and develop the continuous-time distributed mirror descent with integral feedback. In Section \ref{Results}, we provide the theoretical convergence analysis of the algorithm using Lyapunov stability analysis. Section \ref{simulation} provides a discretized version of our algorithm and illustrates a numerical simulation to show effectiveness of the proposed algorithm, and Section \ref{Conclusion} concludes.

\section{Problem Formulation}\label{Formulation}

\subsection{Notation}
We use the following notation in this paper:
\begin{center}
\renewcommand{\arraystretch}{1.2} 
    \begin{tabular}{|c||l|}
    \hline
         $[n]$& set $\{1,2,3,\ldots,n\}$ for any integer $n$  \\
         \hline
         $x^\top$ & transpose of vector $x$ \\
         \hline
         $I_d$ & identity matrix of size $d\times d$\\
         \hline
         $\1_d$ & $d$-dimensional vector of all ones\\
          \hline
         $\0$ & vector of all zeros\\
         \hline
         $\norm{\cdot}$ & Euclidean norm operator\\
         \hline
         $\langle x, y \rangle$ & inner product between $x$ and $y$\\
         \hline
         $[x]_i $  & the $i$-th element of the vector $x$\\
         \hline
         $[A]_{ij}$ & the $ij$-th element of the matrix $A$\\
         \hline
         $A^\dagger$ & pseudo inverse of matrix $A$\\ 
         \hline
         $\otimes$ &  Kronecker product operator\\
         \hline     
    \end{tabular}
\end{center}
The vectors are all in column format. We denote by $\col\{v_1, \ldots, v_n\}$ the vector that stacks all vectors $v_i$ for $i\in [n]$. We use $\diag\{a_1, \ldots, a_n\}$ to represent an $n\times n$ diagonal matrix that has the scalar $a_i$ in its $i$-th diagonal element. 

\subsection{Network Setting}
In distributed optimization, we often consider a network of $n$ agents modeled with a graph $\Gc = (\Vc,\Ec)$, where the agents are represented by nodes $\Vc = [n]$ and the connection between two agents $i$ and $j$ is captured by the edge $\{i,j\} \in \Ec$. Each agent is associated with a local cost function, and if the link $\{i,j\} \in \Ec$ exists, that implies agents $i$ and $j$ can exchange information about their respective cost functions. Then, agent $j$ is in the neighborhood of agent $i$, denoted by $\Nc_i\triangleq\{j \in \Vc: \{i,j\}\in \Ec\}$.

The agents work collectively to find the optimum of the global cost function, which is the sum of all cost local functions (to be defined precisely in Section \ref{sec:dist-opt}).  
\begin{assumption}\label{A1}
We assume the graph $\Gc$ is undirected and connected, i.e., there exists a path between any two distinct agents $i,j \in \Vc$.
We use $\Lc \in \R^{n\times n}$ to the represent the Laplacian of the graph $\Gc$.
\end{assumption}

The connectivity assumption implies that $\Lc$ has a unique null eigenvalue. That is, $\Lc\1_n=\0$, and $\1_n$ is the only direction (eigenvector) recovering the null eigenvalue.

\subsection{Distributed Optimization Problem}\label{sec:dist-opt}
In this paper, we consider a distributed (or decentralized) optimization problem in an unconstrained setting. Let us denote by $f_i:\mathbb{R}^d\rightarrow \mathbb{R}$, the cost function associated with agent $i\in [n]$. Then, the goal is to find the optimal solution of the global cost function $F$, which can be written as a sum of local cost functions as follows,
\begin{equation}\label{globalfunction}
   \underset{x \in \mathbb{R}^d}{\text{minimize}} \:\:\:\:\:\:F(x) = \sum_{i=1}^n f_i(x).
\end{equation}
The above formulation is equivalent to 
\begin{equation}
    \begin{aligned}\label{distributed_problem}
    \underset{x_i \in \mathbb{R}^d}{\text{minimize}} \:\:\:\:\:\:& \sum_{i=1}^n f_i(x_i),\\
    \text{subject to} \:\:\:\:\:\:& x_1=x_2=\cdot\cdot\cdot=x_n.
\end{aligned}
\end{equation}
Since individual agents do not have knowledge of $F$, they cannot find the global solution on their own, and they must communicate with each other to augment their incomplete information with that of their neighborhood. 

\begin{assumption}\label{A3}
For any agent $i\in \Vc$, we assume that the local cost function $f_i:\R^d\to\R$ is convex and differentiable.
\end{assumption}

While Assumption \ref{A3} implies that the global objective function $F$ is also convex and differentiable, we impose an additional assumption on the global cost as follows. 

\begin{assumption}\label{A2}
The global function $F:\R^d\to\R$ is strongly convex. The optimal value denoted by $F^\star$ exists, and the unique solution that achieves $F^\star$ is denoted by $x^\star$.
\end{assumption} 

The assumption above will be used later in the analysis to prove the uniqueness of equilibrium for our proposed distributed continuous-time algorithm. 

\subsection{Centralized Mirror Descent}
Since the focus of this work is on the mirror descent algorithm, we provide some background on the centralized algorithm in this section, before developing the distributed algorithm in Section \ref{subsec:E}.

Gradient descent methods iteratively minimize a first order approximation of a function plus a Euclidean regularizer. Mirror descent generalizes this idea to a non-Euclidean setup by using the notion of {\it Bregman divergence}, which replaces the Euclidean distance as the regularizer. The Bregman divergence is defined with respect to a generating function $\phi:\R^d\to \R$, as follows
$$\Dc_{\phi} (x, x') \triangleq \phi(x) -\phi(x') - \langle \nabla\phi(x'), x-x'\rangle.$$
It can be immediately seen from above that the Bregman divergence is not generally symmetric, thereby it is not a distance. 

\begin{assumption}\label{Assumption_phi}
 The generating function $\phi$ is closed, differentiable and $\mu_\phi$-strongly convex.
\end{assumption}
The assumption above is standard. For example, $\phi(x) = \frac{1}{2}\norm{x}^2$ (the generator for the Euclidean distance), as well as the negative entropy function $\phi(x) =  \sum_{j=1}^d [x]_j \log([x]_j)$ (the generator for the Kullback–Leibler divergence) both satisfy the assumption \cite{shahrampour2017distributed}. 

In discrete time, the unconstrained mirror descent algorithm with learning rate $\eta$ is written as 
\begin{equation}\label{originalmd}
    \begin{aligned}
        x^{(k+1)} &= \underset{x}{\argmin} \bigg\{ F(x^{(k)}) + \eta \nabla F(x^{(k)})^\top (x - x^{(k)})\\
        &\qquad+ \Dc_\phi(x, x^{(k)})\bigg\},
    \end{aligned}
\end{equation}
where using the Euclidean distance in lieu of the Bregman divergence (i.e., $\Dc_\phi(x, x^{(k)})=\frac{1}{2}\|x-x^{(k)}\|^2$) reduces the algorithm to a gradient descent. 

For writing the continuous-time dynamics of mirror decent, an equivalent form of the update above is more convenient to use. This equivalent form is based on the {\it convex conjugate} or {\it Fenchel dual} of function $\phi$, which is denoted by $\phi^\star$ and defined as follows  
$$\phi^\star(z) \triangleq \underset{x \in \R^d}{\text{sup}}\{\langle x, z\rangle - \phi(x)\}.$$
The definition above entails the subsequent equivalence   
$$
z' = \nabla\phi(x') \Longleftrightarrow x' = \nabla\phi^\star(z'),
$$
and Assumption \ref{Assumption_phi} guarantees that $\phi^\star$ is $\mu_\phi^{-1}$-smooth. More details can be found in \cite{hiriart2012fundamentals}.

With the definition of $\phi^\star$ in place, the update \eqref{originalmd} can be rewritten in the following equivalent form

\begin{equation} \label{step_by_step_md}
    \begin{aligned}
        z^{(k+1)} &= z^{(k)} - \eta\nabla F(x^{(k)})\\
        x^{(k+1)} &= \nabla\phi^\star(z^{(k+1)}).
    \end{aligned}
\end{equation}
Then, taking the learning rate $\eta$ to be infinitesimally small, the centralized mirror descent ODE takes the following form
\begin{equation}
   \begin{aligned}
    \Dot{z} &= -\nabla F(x),\\
    x &=\nabla\phi^\star(z),\\
    x(0) = x_0, z(0)&=z_0 \:\:\text{with}\:\: x_0 =\nabla\phi^\star(z_0),
\end{aligned} 
\end{equation}
which has been studied in \cite{krichene2015acceleratedmd} (Section 2.1). It is easy to see that when $\phi(x) = \frac{1}{2}\norm{x}^2$, since $\phi^\star(z) = \frac{1}{2}\norm{z}^2$, we have that $x = \nabla\phi^\star(z)=z$, and the mirror descent ODE reduces to the gradient descent ODE.

\subsection{Distributed Mirror Descent with Integral Feedback} \label{subsec:E}
We now develop the distributed version of mirror descent algorithm. Motivated by the use of {\it integral feedback} \cite{cortes2013weightbalanceddigraph, kia2015distributed, 7744584, yang2016multi} to enforce consensus among agents, we propose the following continuous-time algorithm

\begin{equation}\label{setup}
\begin{aligned}
    \dot{z_i} &= - \nabla f_i(x_i) + \sum_{j \in \Nc_i} (x_j - x_i) + \int_0^t \sum_{j \in \Nc_i} (x_j - x_i)\\
    x_i & =\nabla\phi^\star(z_i),\\
\end{aligned}
\end{equation}
initialized with $x_i(0) = x_{i0}, z_i(0) = z_{i0}$, where $x_{i0} =\nabla\phi^\star(z_{i0})$. 

The dual update $z_i$ for agent $i \in [n]$ uses only private gradient information. It also enforces the primal variables in the neighborhood of $i$ to get close to each other by using both a {\it consensus} term and an {\it integral feedback}. Then, the second update maps the variable $z_i$ back to the primal space using $\phi^\star$.

To analyze \eqref{setup}, it is more convenient to stack all the local vectors as follows
\begin{equation}\label{rename}
\begin{aligned}
    \xb &\triangleq \col\{x_1, x_2,\ldots,x_n\}\\
    \zb & \triangleq \col\{z_1, z_2,\ldots,z_n\},\\
\end{aligned}
\end{equation}
and define the following notation
\begin{equation}\label{rename2}
\begin{aligned}
    \Lb & \triangleq \Lc \otimes I_d\\
    \nabla\phi^\star(\zb) & \triangleq \col\{\nabla\phi^\star(z_1),\nabla\phi^\star(z_2),\ldots,\nabla\phi^\star(z_n)\}\\
    \nabla f(\xb) & \triangleq \col\{\nabla f_1(x_1), \nabla f_2(x_2),\ldots, \nabla f_n(x_n)\}.
\end{aligned}
\end{equation}
Then, by introducing the variable $\yb$ to replace the integral, the dynamics given in \eqref{setup} can be rewritten as follows,
\begin{equation}
    \begin{aligned}
    \Dot{\mathbf{z}} &= -(\nabla f(\mathbf{x}) + \mathbf{L} \mathbf{x} + \mathbf{y}),\label{setup_with_y}\\
    \Dot{\mathbf{y}} &=  \mathbf{L} \mathbf{x},\\
    \xb &= \nabla\phi^\star(\zb),
\end{aligned}
\end{equation}
where $\mathbf{y}\in \mathbb{R}^{nd}$ and $\mathbf{y}(0)=\0.$

\section{Main Results}\label{Results}
In this section, we establish the theoretical convergence of the distributed mirror descent algorithm with integral feedback, proposed in \eqref{setup}. We prove that all agents will converge asymptotically to the minimizer of the global function $F$, defined in \eqref{globalfunction}. First, in Section \ref{equilb_section}, we show that the unique equilibrium of \eqref{setup} for primal variables coincides with the minimizer of problem \eqref{globalfunction}, and then we provide the proof for the asymptotic convergence to the equilibrium in Section \ref{Asymptotic_Convergence_section}. 

\subsection{Equilibrium Analysis}\label{equilb_section}
\begin{lemma}\label{equilb_lemma}
Given Assumptions \ref{A1}-\ref{Assumption_phi}, an equilibrium point for the continuous-time dynamics \eqref{setup} exists, and it is unique. In the equilibrium, $x_i^\star = x^\star =\nabla\phi^\star(z_i^\star)$ for all $i \in [n]$, i.e., the equilibrium point has the consensus property, and at equilibrium, the primal variable for each agent is the solution to problem \eqref{globalfunction}.
\end{lemma}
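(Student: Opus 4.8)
The plan is to impose the equilibrium conditions $\dot{\zb} = \0$ and $\dot{\yb} = \0$ directly on the stacked dynamics \eqref{setup_with_y}. First I would use $\dot{\yb} = \Lb\xb^\star = \0$ to locate $\xb^\star$ in the null space of $\Lb = \Lc \otimes I_d$. Under Assumption \ref{A1} the Laplacian $\Lc$ has a one-dimensional kernel spanned by $\1_n$, so $\mathrm{null}(\Lb) = \{\1_n \otimes v : v \in \R^d\}$; this immediately gives $x_1^\star = \cdots = x_n^\star \triangleq x^\star$, establishing the consensus property.

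The key idea---and the step I expect to be the main obstacle---is to extract a conserved quantity from the integral-feedback variable that forces the consensus value to be the global minimizer. Because $\yb(0) = \0$ and $\dot{\yb} = \Lb\xb$, left-multiplying by $\1_n^\top \otimes I_d$ and using the symmetric-Laplacian identity $(\1_n^\top \otimes I_d)(\Lc \otimes I_d) = (\1_n^\top\Lc)\otimes I_d = \0$ shows that $\frac{d}{dt}\sum_{i=1}^n y_i(t) = \0$, hence $\sum_{i=1}^n y_i(t) = \0$ for all $t$. Equivalently, $\yb(t)$ stays on the invariant manifold $\mathrm{col}(\Lb)$ throughout the trajectory. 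This invariance is essential: without it, any consensus vector $v$ would generate a fixed point via $y_i^\star = -\nabla f_i(v)$, so the restriction $\sum_i y_i = \0$ is precisely what will yield uniqueness.

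I would then combine this with the second equilibrium condition. Setting $\dot{\zb} = \0$ gives $\nabla f(\xb^\star) + \Lb\xb^\star + \yb^\star = \0$, and substituting $\Lb\xb^\star = \0$ leaves $\yb^\star = -\nabla f(\xb^\star)$. Summing the blocks and invoking the conserved quantity yields $\sum_{i=1}^n \nabla f_i(x^\star) = \nabla F(x^\star) = \0$. By Assumption \ref{A2}, $F$ is strongly convex, so this stationarity condition characterizes $x^\star$ as its unique minimizer, proving the optimality claim.

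Existence and uniqueness I would handle constructively. Taking $x^\star$ to be the minimizer of $F$ and setting $x_i^\star = x^\star$, $z_i^\star = \nabla\phi(x^\star)$, and $y_i^\star = -\nabla f_i(x^\star)$, I would verify that all equilibrium equations hold and that $\sum_i y_i^\star = -\nabla F(x^\star) = \0$, so $\yb^\star$ lies on the invariant manifold $\mathrm{col}(\Lb)$ and is therefore consistent with the $\yb$-dynamics. Uniqueness then follows since every coordinate is pinned down by $x^\star$: strong convexity makes $x^\star$ unique, the Fenchel bijection $x = \nabla\phi^\star(z) \Leftrightarrow z = \nabla\phi(x)$ from Assumption \ref{Assumption_phi} forces $z_i^\star = \nabla\phi(x^\star)$, and $y_i^\star = -\nabla f_i(x^\star)$ is then determined.
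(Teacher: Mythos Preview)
Your proposal is correct and follows essentially the same argument as the paper: use $\dot{\yb}=\0$ plus Assumption~\ref{A1} to force consensus, exploit the invariant $(\1_n\otimes I_d)^\top\yb(t)=\0$ (the paper writes this via $\yb(t)=\Lb\int_0^t\xb(\tau)d\tau$) together with $\dot{\zb}=\0$ to obtain $\nabla F(x^\star)=\0$, and invoke strong convexity for uniqueness. Your explicit construction of the equilibrium and the appeal to the Fenchel bijection for uniqueness of $z_i^\star$ are slightly more detailed than the paper's presentation, but the route is the same.
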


\begin{proof}
Since the continuous-time dynamics \eqref{setup_with_y} is equivalent to \eqref{setup}, to prove Lemma \ref{equilb_lemma}, it is sufficient to show that there exists a unique point $(\xb^\star,\yb^\star,\zb^\star)$ satisfying equilibrium conditions for \eqref{setup_with_y}: 

-- To have $\dot{\yb}=\0$, we need $\xb^\star$ to be in the null space of $\Lb=\Lc \otimes I_d$, which together with the connectivity assumption (Assumption \ref{A1}), it implies that $\xb^\star = \1_n \otimes a$ for some vector $a\in \R^d$. Next, we show that indeed $a=x^\star$, where $x^\star$ is the minimizer of $F$.

-- To have $\dot{\zb}=\0$, we need 
\begin{align}\label{eq:111}
    \nabla f(\mathbf{x}^\star) + \mathbf{y}^\star=\0.
\end{align}
Due to the initialization $\yb(0)=\0$, we have that 
\begin{align}\label{eq:32}
    \yb(t)=\Lb\int_0^t\xb(\tau)d\tau,
\end{align}
which implies 
$$(\1_n\otimes I_d)^\top\yb(t)=(\1_n\otimes I_d)^\top\Lb\int_0^t\xb(\tau)d\tau=\0.$$
Therefore, $(\1_n\otimes I_d)^\top\yb^\star=\0$, and combining this with \eqref{eq:111}, we get
$$(\1_n\otimes I_d)^\top\nabla f(\mathbf{x}^\star)= \0 \Longrightarrow \sum_{i=1}^n \nabla f_i(a)=\nabla F(a)= \0.$$
Due to the strong convexity of $F$ in Assumption \ref{A2}, the minimizer $x^\star$ is unique, and therefore, $a=x^\star$. Hence, the following point is the unique equilibrium
\begin{align*}
\xb^\star &= \1_n \otimes x^\star\\
\yb^\star &= - \nabla f(\xb^\star)\\
\zb^\star &= \nabla \phi (\xb^\star),
\end{align*}
thereby completing the proof.
\end{proof}

Note that though agents reach consensus at the global minimizer of $F$, since local objective functions $\{f_i\}_{i=1}^n$ do not have the same minimizers, $\nabla f(\xb^\star)$ is not necessarily zero, and more specifically, as proved in the lemma, we have the following relationship
\begin{equation}\label{eq:22}
    \nabla f(\mathbf{x}^\star) = - \mathbf{y}^\star.
\end{equation}

\subsection{Global Asymptotic Convergence}\label{Asymptotic_Convergence_section}
In order to better capture the dynamics of the variables, without loss of generality, we shift the equilibrium of the dynamics to zero by defining a set of new variables
\begin{equation}
    \begin{aligned}
        \Tilde{\xb} &\triangleq \xb - \xb^\star,\\
        \Tilde{\yb} &\triangleq \yb - \yb^\star,\\
        \Tilde{\zb} &\triangleq \zb - \zb^\star,
    \end{aligned}
\end{equation}
where $(\xb^\star,\yb^\star,\zb^\star)$ is the unique equilibrium point given in Lemma \ref{equilb_lemma}. We can then rewrite the first two equations of \eqref{setup_with_y} as follows
\begin{equation}
    \begin{aligned}
    \Dot{\Tilde{\zb}} &= -(\nabla f(\Tilde{\xb} + \xb^\star) + \mathbf{L} (\Tilde{\xb} + \xb^\star) + \Tilde{\yb} + \yb^\star)\label{setup_shift}\\
    &= -(\nabla f(\Tilde{\xb} + \xb^\star) - \nabla f(\xb^\star)) - \mathbf{L} \Tilde{\xb}- \Tilde{\yb} ,\\
    \Dot{\Tilde{\yb}} &=  \mathbf{L}\Tilde{\xb}, 
\end{aligned}
\end{equation}
where we used the fact that $\xb^\star = \1_n \otimes x^\star$ and $\yb^\star = - \nabla f(\xb^\star)$.

Now, as the matrix $\Lb = \Lc \otimes I_d$ is symmetric and positive semi-definite, there exists a decomposition $\Lb = \mathbf{Q\Lambda Q}^\top$, where $\Qb$ is an orthogonal matrix and $\mathbf{\Lambda} = \diag\{\lambda_1,\ldots, \lambda_{nd}\}$ is a diagonal matrix. Let 
$$\mathbf{S} = \Lb^{\frac{1}{2}}=\mathbf{Q\Lambda^{\frac{1}{2}} Q}^\top,$$ 
where $\mathbf{\Lambda}^\frac{1}{2} = \diag\{\sqrt{\lambda_1}, \ldots, \sqrt{\lambda_{nd}}\}$. 
Given \eqref{eq:32},
there exists a variable $\wb(t)=\Sb\int_0^t\xb(\tau)d\tau$ and its centered version $\Tilde{\wb}=\wb-\wb^\star$, such that
\begin{equation}
    \begin{aligned}
        \yb &= \Sb \wb,~~~~ \text{and }~~~~\Tilde{\yb} = {\Sb}\Tilde{\wb}.\\
    \end{aligned}
\end{equation}
Replacing $\Tilde{\yb}$ in \eqref{setup_shift} with $\Sb\Tilde{\wb}$, we have that
\begin{equation}
    \begin{aligned}
    \Dot{\Tilde{\zb}}     &= -(\nabla f(\Tilde{\xb} + \xb^\star) - \nabla f(\xb^\star)) - \mathbf{L} \Tilde{\xb}-  \mathbf{S}\Tilde{\wb} ,\\
    \Dot{\Tilde{\wb}} &=  \mathbf{S}\Tilde{\xb} . \label{shift_S}
\end{aligned}
\end{equation}
Following the proof of Lemma \ref{equilb_lemma}, it is straightforward to show that the dynamics above at equilibrium satisfies $\Tilde{\xb}=\0$.

\begin{theorem}\label{theorem1} 
    Given Assumptions \ref{A1}-\ref{Assumption_phi}, for any starting point $x_i(0) = x_{i0}, z_i(0) = z_{i0}$ with $x_{i0}=\nabla\phi^\star(z_{i0})$, the distributed mirror descent algorithm with integral feedback proposed in \eqref{setup} will converge to the global optimum asymptotically, i.e., $\lim_{t\to\infty} x_i(t)=x^\star$ for any $i\in [n]$.
\end{theorem}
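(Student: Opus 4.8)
The plan is to prove Theorem \ref{theorem1} by establishing global asymptotic stability of the shifted dynamics \eqref{shift_S} via LaSalle's invariance principle, with a Lyapunov function that pairs a Bregman term on the dual variable with a quadratic term on the integral-feedback variable. Concretely, I would take
$$V = \Dc_{\phi^\star}(\zb,\zb^\star) + \tfrac12\norm{\tilde{\wb}}^2,$$
where $\Dc_{\phi^\star}$ is the Bregman divergence generated by the conjugate $\phi^\star$. The first thing I would record is the duality identity $\Dc_{\phi^\star}(\zb,\zb^\star)=\Dc_\phi(\xb^\star,\xb)$, which follows from Fenchel's equality together with $\xb=\nabla\phi^\star(\zb)$. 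Combined with $\mu_\phi$-strong convexity of $\phi$ (Assumption \ref{Assumption_phi}), this gives $V\ge \tfrac{\mu_\phi}{2}\norm{\tilde{\xb}}^2\ge 0$, with $V=0$ exactly at the equilibrium. This makes $V$ a legitimate candidate and ties it directly to the quantity $\tilde{\xb}$ that we want to drive to zero.

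Next I would differentiate $V$ along \eqref{shift_S}. Using $\tfrac{d}{dt}\Dc_{\phi^\star}(\zb,\zb^\star)=\langle\nabla\phi^\star(\zb)-\nabla\phi^\star(\zb^\star),\dot{\zb}\rangle=\langle\tilde{\xb},\dot{\tilde{\zb}}\rangle$ and $\tfrac{d}{dt}\tfrac12\norm{\tilde{\wb}}^2=\langle\tilde{\wb},\Sb\tilde{\xb}\rangle=\langle\Sb\tilde{\wb},\tilde{\xb}\rangle$ (by symmetry of $\Sb$), the indefinite cross term $-\langle\tilde{\xb},\Sb\tilde{\wb}\rangle$ produced by the dual update cancels exactly against the contribution of the $\tilde{\wb}$-dynamics. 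This cancellation is the whole point of the $\tfrac12\norm{\tilde{\wb}}^2$ term and the reason the integral feedback is compatible with a clean Lyapunov argument. What remains is
$$\dot{V}=-\langle\tilde{\xb},\nabla f(\tilde{\xb}+\xb^\star)-\nabla f(\xb^\star)\rangle-\tilde{\xb}^\top\Lb\tilde{\xb}\le 0,$$
where the first term is nonnegative by monotonicity of $\nabla f$ (convexity of each $f_i$, Assumption \ref{A3}) and the second by positive semidefiniteness of $\Lb$.

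The core of the argument is then the invariant-set analysis. On $\{\dot{V}=0\}$ both terms vanish: $\tilde{\xb}^\top\Lb\tilde{\xb}=0$ forces $\tilde{\xb}\in\ker\Lb$, i.e. $\tilde{\xb}=\1_n\otimes b$ for some $b\in\R^d$ by connectivity (Assumption \ref{A1}); substituting this into the vanishing first term yields $\langle b,\nabla F(x^\star+b)-\nabla F(x^\star)\rangle=0$, which by strong convexity of $F$ (Assumption \ref{A2}) forces $b=\0$ and hence $\tilde{\xb}=\0$. To identify the \emph{largest} invariant set, note that $\tilde{\xb}\equiv\0$ gives $\zb\equiv\zb^\star$ by injectivity of $\nabla\phi^\star$, so that $\dot{\tilde{\zb}}=\0$ in \eqref{shift_S} collapses to $\Sb\tilde{\wb}=\0$; combining $\tilde{\wb}\in\mathrm{range}(\Sb)$ (since $\wb(t)=\Sb\int_0^t\xb(\tau)\,d\tau$, and $\wb^\star$ lies in the same closed subspace) with $\tilde{\wb}\in\ker\Sb$ yields $\tilde{\wb}=\0$. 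Thus the largest invariant set inside $\{\dot{V}=0\}$ is the single equilibrium, and LaSalle delivers $\tilde{\xb}(t)\to\0$, i.e. $x_i(t)\to x^\star$ for every $i$.

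The step I expect to be the main obstacle is verifying the precompactness hypothesis that LaSalle requires. The sublevel sets of $V$ bound $\norm{\tilde{\xb}}$ and $\norm{\tilde{\wb}}$ (hence $\xb$ and $\wb$), but since $\phi^\star$ is only $\mu_\phi^{-1}$-smooth and need not be strongly convex, $\Dc_{\phi^\star}(\zb,\zb^\star)$ need not be coercive in $\zb$, so boundedness of the dual trajectory $\zb(t)$ is not automatic. I would handle this either by arguing boundedness of $\zb$ directly from the dynamics, or, sidestepping state-space compactness altogether, by a Barbalat-type argument: $\dot{V}\le 0$ and $V\ge 0$ give $\int_0^\infty\big(\langle\tilde{\xb},\nabla f(\xb)-\nabla f(\xb^\star)\rangle+\tilde{\xb}^\top\Lb\tilde{\xb}\big)\,dt<\infty$, and uniform continuity of the integrand (from boundedness of $\xb,\tilde{\wb}$ and the $\mu_\phi^{-1}$-smoothness of $\phi^\star$) forces both nonnegative terms to zero. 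The algebraic connectivity of $\Lb$ then drives the disagreement component of $\tilde{\xb}$ to zero, and strong convexity of $F$ handles the remaining consensus component, again yielding $\tilde{\xb}\to\0$.
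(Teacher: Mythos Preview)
Your approach is essentially the same as the paper's: the same Lyapunov function $V=\Dc_{\phi^\star}(\zb,\zb^\star)+\tfrac12\norm{\tilde{\wb}}^2$, the same cancellation of the cross term $\langle\tilde{\xb},\Sb\tilde{\wb}\rangle$ yielding $\dot V=-\langle\tilde{\xb},\nabla f(\xb)-\nabla f(\xb^\star)\rangle-\tilde{\xb}^\top\Lb\tilde{\xb}$, and the same LaSalle conclusion via connectivity (Assumption~\ref{A1}) plus strong convexity of $F$ (Assumption~\ref{A2}). Your treatment is in fact more careful than the paper's on two points it glosses over---you explicitly characterize the largest invariant set in $\{\dot V=0\}$ (including the $\tilde{\wb}$ component), and you flag the precompactness hypothesis on $\zb$ that LaSalle requires, offering a Barbalat-type workaround.
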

\begin{proof}
We study the convergence of the dynamics \eqref{shift_S}. Let us consider the candidate Lyapunov function
\begin{equation}\label{lyap}
    V(\Tilde{\zb},\Tilde{\wb}) = \sum_{i=1}^n \Dc_{\phi^\star}(z_i, z^\star) + \frac{1}{2} \Tilde{\wb}^\top   \Tilde{\wb}.
\end{equation}
Notice that the Bregman divergence used in the candidate Lyapunov function is defined with respect to $\phi^\star$.
Since $\phi^\star$ is convex and $\mu_\phi^{-1}$-smooth, $V(\Tilde{\zb},\Tilde{\wb})$ is non-negative and has Lipschitz-continuous first derivatives. Differentiating $V$ and recalling \eqref{setup}-\eqref{setup_shift}-\eqref{shift_S}, we derive

\begin{small}
\begin{align*}
    \dot{V} &= \frac{d}{dt}\sum_{i=1}^n \bigg(\phi^\star(z_i) -\phi^\star(z^\star) - \langle \nabla\phi^\star(z^\star), z_i - z^\star \rangle \bigg)  +\Tilde{\mathbf{w}}^\top \frac{d \Tilde{\mathbf{w}} }{dt}\\
    &= \sum_{i=1}^n \langle x_i -x^\star, \frac{d z_i}{dt}\rangle + \Tilde{\mathbf{w}}^\top S \Tilde{\mathbf{x}}\\
    &= \langle \mathbf{x} -\mathbf{x}^\star, \frac{d \mathbf{z}}{dt}\rangle+ \Tilde{\mathbf{w}}^\top S \Tilde{\mathbf{x}}\\
    &= \vphantom{\frac{d}{dt}} \langle \Tilde{\mathbf{x}}, -(\nabla f(\Tilde{\mathbf{x}}+ \mathbf{x}^\star) - \nabla f( \mathbf{x}^\star) + \mathbf{L} \Tilde{\mathbf{x}} + S\Tilde{\mathbf{w}})\rangle+ \Tilde{\mathbf{w}}^\top S \Tilde{\mathbf{x}}\\
    &= \vphantom{\frac{d}{dt}}- \langle \Tilde{\mathbf{x}}, \nabla f(\Tilde{\mathbf{x}}+ \mathbf{x}^\star) - \nabla f( \mathbf{x}^\star) \rangle -\Tilde{\mathbf{x}}^\top \mathbf{L} \Tilde{\mathbf{x}}.
\end{align*}
\end{small}
It is clear from the convexity of local functions that $\dot{V}\leq 0$ at all times. When local variables do not have consensus, $\dot{V}<0$ since $\Tilde{\mathbf{x}}^\top \mathbf{L} \Tilde{\mathbf{x}} < 0$. When consensus is reached, $\Tilde{\xb}=\1_n \otimes a$, and then the first term $\langle \Tilde{\mathbf{x}}, \nabla f(\Tilde{\mathbf{x}}+ \mathbf{x}^\star) - \nabla f( \mathbf{x}^\star) \rangle = \sum_{i=1}^n \langle a, \nabla f_i(a+ {x}^\star) - \nabla f_i( {x}^\star) \rangle$ is equal to zero if and only if $a=\0$, which implies $\Tilde{\xb}=\0$. The uniqueness of $a=\0$ is due to the strong convexity in Assumption \ref{A2}. Therefore, the condition for equality is $\xb=\xb^\star$, which also gives $\zb = \zb^\star$.

The Lyapunov function satisfies $V>0$ for $\Tilde{\zb} \neq \0$, and $V=0$ when $\Tilde{\zb} =\0,\Tilde{\wb}=\0$. We also have $\dot{V} \leq 0$ with equality only at equilibrium. Then, by LaSalle's invariance principle, the dynamics \eqref{shift_S} will converge asymptotically to its equilibrium point, and this completes the proof.
\end{proof}

\section{Numerical Simulation}\label{simulation}
In this section, we first derive a discretized version of our algorithm in \eqref{discrete_final} and then illustrate a numerical example that shows the advantage of using integral feedback for speeding up the convergence of distributed mirror descent. 
\subsection{Discretization}
Recall the continuous-time dynamics \eqref{setup_with_y}. We use Euler's method to derive a discrete version of the algorithm as follows. We first choose a time interval for discretization denoted by $\Delta t$. Let $t_k \triangleq k \Delta t$ and  $\xb^{(k)} \triangleq \xb(t_k) = \xb(k\Delta t)$. We can similarly define $\yb^{(k)}$ and $\zb^{(k)}$. We then have the following discrete updates
\begin{equation}
    \begin{aligned}\label{discrete_with_y1}
    \frac{\mathbf{z}^{(k+1)}-\mathbf{z}^{(k)}}{\Delta t}&=  -(\nabla f(\mathbf{x}^{(k)}) + \mathbf{L} \mathbf{x}^{(k)} + \mathbf{y}^{(k)}),\\
   \frac{\mathbf{y}^{(k+1)}-\mathbf{y}^{(k)}}{\Delta t} &=  \mathbf{L} \mathbf{x}^{(k)}. 
\end{aligned}
\end{equation}

After re-arranging the terms, the fully distributed mirror descent algorithm with integral feedback takes the following (discrete) form
\begin{equation}\label{discrete_final}
    \begin{aligned}
        {z_i}^{(k+1)}&=  {z_i}^{(k)} - \bigg( \nabla f_i({x_i}^{(k)}) + {y_i}^{(k)}\vphantom{\sum_{j \in \Nc_i}} \\
    & \qquad  +\sum_{j \in \Nc_i} ({x_i}^{(k)} - {x_j}^{(k)})  \bigg)\Delta t,
     \\
   {y_i}^{(k+1)} &= {y_i}^{(k)} + \sum_{j \in \Nc_i} ({x_i}^{(k)} - {x_j}^{(k)}) \Delta t \vphantom{\bigg)},\\
   {x_i}^{(k+1)} &= \nabla\phi^\star({z_i}^{(k+1)}) \vphantom{\sum_{j \in \Nc_i}\bigg)}.
\end{aligned}
\end{equation}

\subsection{Numerical Example}
We now provide a simulation for the update  \eqref{discrete_final}. 

{\bf -- Network Structure:} We consider a $10$-agent cycle network, where each agent is connected to its previous and next agent, and the last agent is connected to the first agent. The network structure is shown in Fig. \ref{network}. 

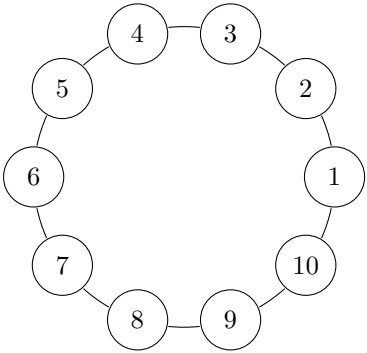
\begin{figure}[h]
\centering
\begin{tikzpicture}

\def \n {10}
\def \radius {2cm}
\def \margin {12} 

\foreach \s in {1,...,\n}
{
  \node[draw, circle, minimum size=0.8cm] at ({360/\n * (\s - 1)}:\radius) {$\s$};
  \draw[-, >=latex] ({360/\n * (\s - 1)+\margin}:\radius) 
    arc ({360/\n * (\s - 1)+\margin}:{360/\n * (\s)-\margin}:\radius);
}
\end{tikzpicture}
\caption{Structure of the undirected network}
\label{network}
\end{figure}

{\bf -- Generating Function for Mirror Descent:} To implement mirror descent, we employ the commonly used negative entropy as the generating function, where
\begin{equation*}
        \phi(x) = \sum_{j=1}^d [x]_j \log([x]_j) \Longrightarrow
        [z]_i = [\nabla \phi(x)]_i = 1+ \log([x]_i),
\end{equation*}
and by convention $[x]_j \log([x]_j)=0$ if $[x]_j=0$. Here, $x$ and $z$ are both $d$-dimensional vectors. By simple calculations, it can be shown that $\phi^\star$, the convex conjugate of $\phi$, takes the following form
\begin{equation*}
        \phi^\star(z) = \sum_{j=1}^d e^{[z]_j -1} \Longrightarrow
        [x]_i = [\nabla \phi^\star(z)]_i = e^{[z]_i -1}.
\end{equation*}
Thus, we can now implement \eqref{discrete_final}.

{\bf -- Global and Local Functions:} To construct the functions, we first generate a $100$-dimensional vector $u$ following a Gaussian distribution $\Nc(10\times\1_d, I_d)$. We then perturb $u$ to generate local optima $u_i = u+w_i$, where $w_i\sim \Nc(\0, I_d)$ for $i\in[n]$. We set the local functions $f_i(x) = \frac{1}{2}\norm{A_i x - b_i}^2$, where $b_i = A_i u_i$ and $A_i \in \R^{20\times 100}$ is a random matrix of rank $15$. The global function becomes $F(x) = \frac{1}{2}\norm{\Ab x - \bb}^2$, where $\Ab\in \R^{200\times 100}$ and $\bb\in \R^{200}$ are stacked versions of their distributed counterparts. We can verify that $F$ is strongly convex, and the closed-form solution for this problem is $x^\star = \Ab^\dagger \bb$. We run \eqref{discrete_final} with a feasible random initialization $x_i^{(0)}$, and let $y_i^{(0)} = \0$ and $z_i^{(0)} = \nabla \phi (x_i^{(0)})$ for every $i\in [n]$. Recall that $n=10$ and $d=100$, and we set $\Delta t = 10^{-2}.$ 

Note that the local objective functions are only convex, but the global objective function $F(x) = \sum_{i=1}^n f_i(x)$ is strongly convex in consistent with our theoretical assumptions. 

{\bf -- Performance:} We compare our method with distributed mirror descent {\it without} integral feedback \cite{li2016distributed,shahrampour2017distributed}. These works were originally proposed for convex global functions with a suggested diminishing step-size $\frac{1}{\sqrt{k}}$. Beside that, we also include their performance with constant step-size.

\begin{figure}[h]
\centering
\includegraphics[width=0.5\textwidth]{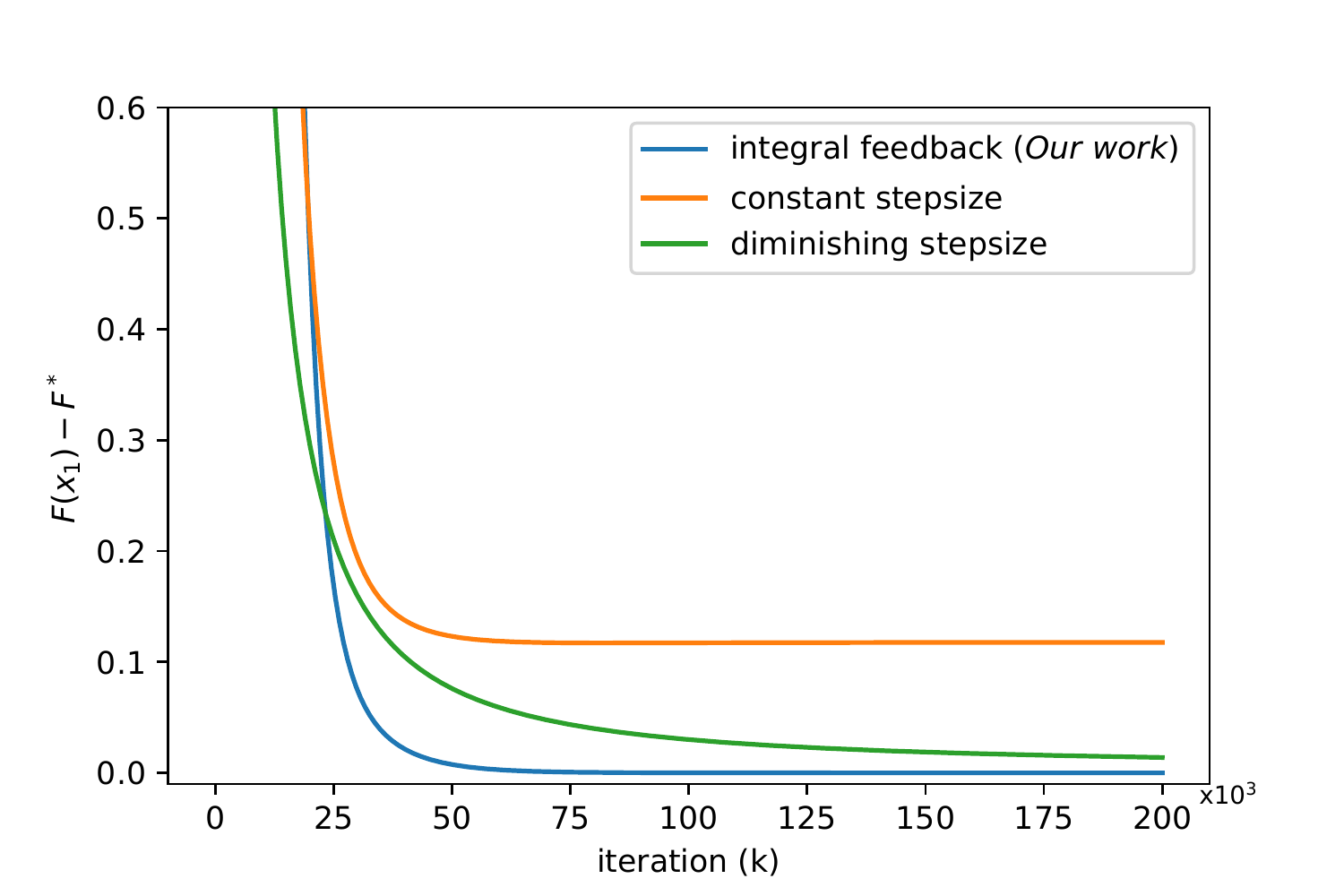}
\caption{The trajectory of the global objective evaluated at agent $1$}
\label{comparison}

\end{figure}
\begin{figure}[h]
\centering
\includegraphics[width=0.5\textwidth]{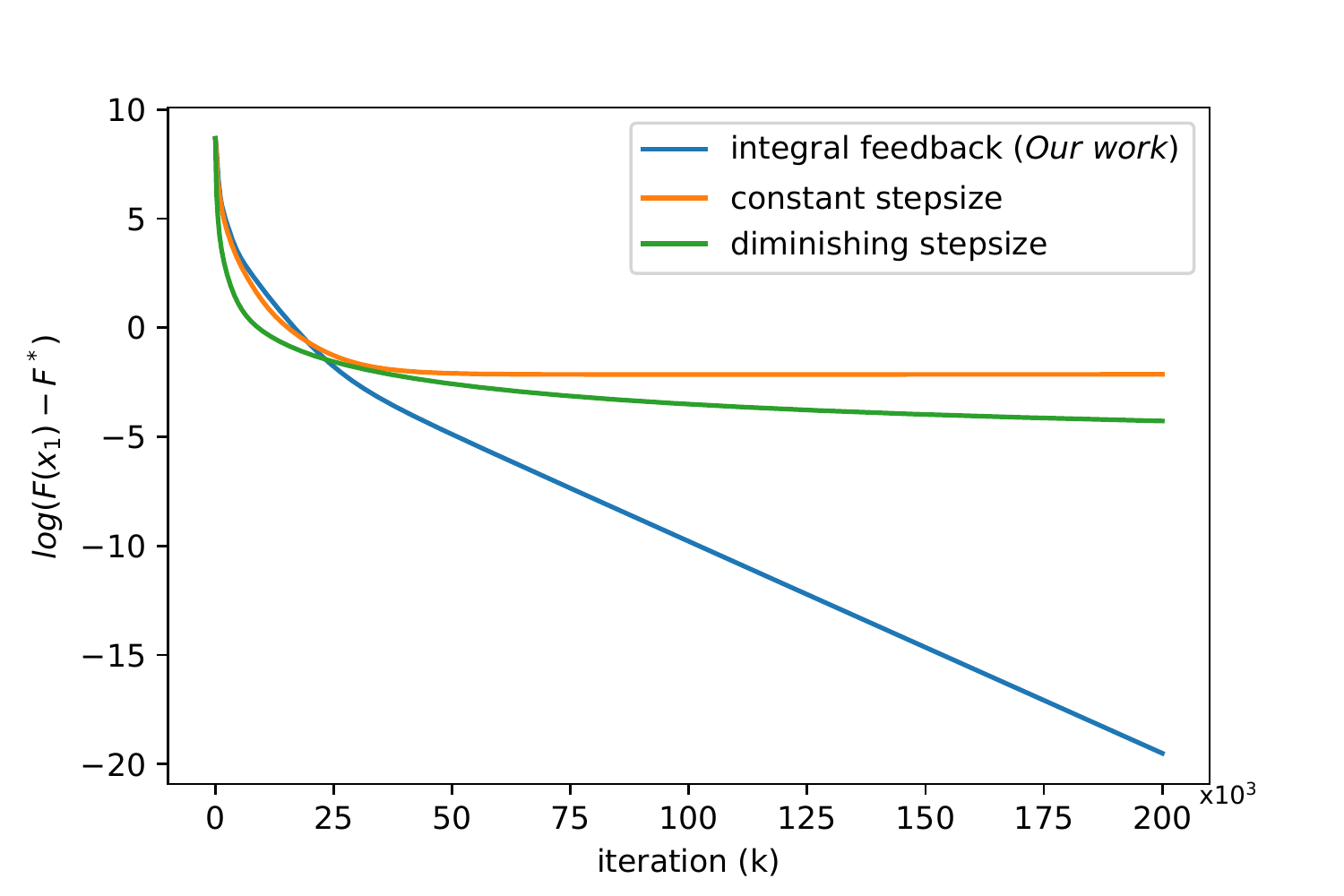}
\caption{The trajectory of the log-distance to global solution evaluated at agent $1$}
\label{log_comparison}

\end{figure}

For all three algorithms, we plot $F(x_1^{(k)})- F^\star$ with respect to iteration $k$ in Fig. \ref{comparison}, which presents the convergence properties of agent $1$. We can see that our method converges faster that distributed mirror descent without integral feedback. In fact, without integral feedback, agents never converge to the global solution using a constant step-size, because the local objective functions have different local minima. We further plot $\log(F(x_1^{(k)})- F^\star)$ with respect to iteration $k$ in Fig. \ref{log_comparison}. Interestingly, our method exhibits a linear convergence rate (i.e., exponentially fast), which is on par with the state-of-the-art distributed gradient descent methods (in the sense of achieving a linear rate). We reiterate that diminishing step-size is suitable for convex (and not strongly convex) global objective functions. The main purpose of the comparisons with other methods is to illustrate the power of integral feedback as soon as the strong convexity assumption is satisfied.

\section{Conclusion}\label{Conclusion}
In this paper, we considered a distributed optimization scenario where a network of agents aims at minimizing a strongly convex function, that can be written as a sum of local convex functions. The agents only have access to local gradients, but they are able to exchange information with one other. We proposed a fully decentralized mirror descent algorithm that enforces consensus among agents through a consensus term plus an additional integral feedback. We studied the continuous-time dynamics of the algorithm and provided asymptotic convergence using Lyapunov stability. Focusing on strongly convex problems, we presented empirical results verifying that distributed mirror descent with integral feedback enjoys a faster convergence rate, compared to its variants without integral feedback.

This paper provides technical analysis for the asymptotic convergence, but the simulations show that the algorithm (perhaps with smoothness assumption) can exhibit exponential convergence. Therefore, a potential future direction is the theoretical analysis of this behavior. Furthermore, studying the theoretical guarantees of \eqref{discrete_final} will shed more light on required technical assumptions in maintaining the same convergence rate when transitioning from the continuous-time update to the discrete-time update.

\bibliographystyle{IEEEtran}
\bibliography{References}


\end{document}